\theoremstyle{plain}
\newtheorem{thm}{Theorem}
\newtheorem{theorem}{Theorem}[subsection]
\newtheorem{prop}[theorem]{Proposition}
\newtheorem{Question}{Question}
\theoremstyle{definition}
\newtheorem{rmk}[theorem]{Remark}
\newtheorem{ex}[theorem]{Example}
\newtheorem*{thm*}{Theorem}
\newtheorem*{ex*}{Example}
\newcommand\sO{{\mathcal O}}
\newcommand\J{{\mathcal J}}
\newcommand\I{{\mathcal I}}
\title
{Theta-regularity and log--canonical threshold}
\author{Morten \O ygarden}
\address{Department of Mathematics\\ University of Bergen, All\'egaten 41, Bergen, Norway}
\email{\url{morten.oygarden@student.uib.no}}
\author{Sofia Tirabassi}
\address{Department of Mathematics\\ University of Bergen, All\'egaten 41, Bergen, Norway}
\email{\url{sofia.tirabassi@uib.no}}
\begin{document} \maketitle
\begin{abstract}
 We show that an inequality, proven by K\"uronya--Pintye, which governs the behavior of the log--canonical threshold of an ideal over $\mathbb{P}^n$ and that of its Castelnuovo--Mumford regularity, can be applied to the setting of principally polarized abelian varieties by substituting the Castelnuovo--Mumford regularity with $\Theta$-regularity of Pareschi--Popa.
\end{abstract}
\section*{Introduction}
In \cite{KP} the authors provide an inequality which ties two important invariants of an ideal sheaf over $\mathbb{P}^n$, namely its Castelnuovo--Mumford regularity and its log--canonical threshold. The main ingredients of their proof are Nadel Vanishing Theorem for multiplier ideals, which works on any smooth projective variety, and Mumford Theorem on Castelnuovo--Mumford regularity (\cite{Laz}*{Theorem 1.8.3}).\par
In a long series of articles (see for example \cites{PP2003,PP2004,PP2008,PP2008b,PP2008c,PP2011,PP3}) Pareschi--Popa developed a regularity theory for abelian varieties which shares many points in common with Castelnuovo--Mumford regularity. In particular, in \cite{PP2003}, they introduced the $\Theta$--regularity index for a sheaf $\mathcal{F}$ on a principally polarized abelian variety $(A,\Theta)$, $\Theta$-reg$(\mathcal{F})$, and they showed some striking similarities of its behavior with that of the Castelnuovo--Mumford regularity index of sheaves on the projective space. For example they prove Theorem \ref{maintheta} below that can be seen as an analogue of the aforementioned Mumford Theorem.\par
In this paper we show that the same inequality proved by K\"uronya and Pintye governs the behavior of the log--canonical threshold and the $\Theta$-regularity index of an ideal sheaf over an principally polarized abelian variety. More precisely, our main result is the following:
\begin{thm}\label{MainResultThetaLCT}
Let (A,$\Theta$) be a principally polarized abelian variety. For any coherent sheaf of ideals $\mathcal{I}\neq \mathcal{O}_A$, the following inequality holds:
$$1 < \text{\emph{lct}}(\I)(\Theta \text{\emph{--reg}}(\I)).$$ 
\end{thm}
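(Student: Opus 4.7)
The plan is to mirror the strategy of K\"uronya-Pintye \cite{KP}, replacing their appeal to Mumford's global generation theorem by Pareschi-Popa's analogue for $\Theta$-regularity (Theorem \ref{maintheta}). Set $r = \Theta\text{-reg}(\I)$ and $c = \text{lct}(\I)$; I would argue by contradiction, assuming $cr \leq 1$.

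First, by Theorem \ref{maintheta}, $\I(r\Theta)$ is globally generated on $A$. Passing to a log resolution $\pi \colon Y \to A$ of $\I$, with $\pi^{-1}\I \cdot \sO_Y = \sO_Y(-F)$, this global generation implies that the divisor class $\pi^*(r\Theta) - F$ is nef on $Y$. For any rational $c' > 0$ with $c'r < 1$, the identity
\[
\pi^*\Theta - c'F \;=\; (1-c'r)\,\pi^*\Theta \;+\; c'\,\bigl(\pi^*(r\Theta) - F\bigr)
\]
exhibits $\pi^*\Theta - c'F$ as the sum of a big $\mathbb{Q}$-divisor (the first summand, since $\pi^*\Theta$ is big as pullback of the ample $\Theta$) and a nef one (the second, by the previous sentence); hence it is big and nef. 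Nadel's vanishing theorem, together with $\omega_A \cong \sO_A$, then yields that $\J(c' \cdot \I) \otimes \sO(\Theta)$ is IT$_0$.

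Under the working hypothesis $cr < 1$, we may choose $c' > c$ with $c'r < 1$, so that $\J(c' \cdot \I) \subsetneq \sO_A$ is a proper ideal. Because $h^0(A, \sO(\Theta) \otimes \alpha) = 1$ for every $\alpha \in \Pico{A}$, the IT$_0$ property forces $h^0\bigl(A, \J(c'\I) \otimes \sO(\Theta) \otimes \alpha\bigr)$ to be either $0$ or $1$, constant in $\alpha$. The value $0$ is ruled out by the Fourier-Mukai equivalence of Mukai, since any nonzero IT$_0$ sheaf has positive Euler characteristic. The value $1$ forces the unique section of $\J(c'\I)(\Theta) \otimes \alpha$ to be a scalar multiple of the $\alpha$-translated theta section, whose zero locus is the translated theta divisor $\Theta + \alpha$; consequently, $\mathrm{Supp}\bigl(\sO_A / \J(c'\I)\bigr)$ is contained in $\Theta + \alpha$ for every $\alpha \in \Pico{A}$, hence in $\bigcap_\alpha (\Theta + \alpha) = \emptyset$, contradicting $\J(c'\I) \subsetneq \sO_A$.

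The main remaining obstacle, which I expect to be the most delicate step, is the borderline case $cr = 1$, in which the positivity decomposition above degenerates so that only nefness survives. A natural route is to replace $\sO(\Theta)$ in the Nadel step by $\sO(k\Theta)$ for some $k \geq 2$, restoring bigness, and then to rule out the resulting IT$_0$ proper ideal $\J(c \cdot \I) \otimes \sO(k\Theta)$ via a more quantitative comparison between $h^0\bigl(\J(c\I)(k\Theta)\otimes\alpha\bigr)$ and $h^0(\sO(k\Theta) \otimes \alpha) = k^g$, likely invoking the continuous global generation technology of Pareschi-Popa.
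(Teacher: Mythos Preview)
Your treatment of the strict case $cr<1$ is correct and close to the paper's in spirit, though organized differently: rather than isolating a lower bound on $\Theta$--regularity (Proposition~\ref{geqthree}) and an upper bound on $\Theta\text{--reg}(\J(c\cdot\I))$ (Proposition~\ref{ThetaRegBounds}), you go directly to the IT$_0$ statement for $\J(c'\cdot\I)(\Theta)$ and then extract the contradiction by the section-count $h^0(\sO_A(\Theta)\otimes\alpha)=1$. That last step is essentially an alternative proof of Proposition~\ref{geqthree} in the IT$_0$ situation, and it is a nice variant.

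The borderline $cr=1$, however, is a genuine gap, and the route you sketch through $k\Theta$ with $k\geq 2$ does not close it. Taking $L=2\Theta$ restores bigness and yields that $\J(c\cdot\I)(2\Theta)$ is IT$_0$; but this says only that $\J(c\cdot\I)$ is $3$--$\Theta$--regular, which is perfectly compatible both with Proposition~\ref{geqthree} and with your section-counting argument (now $h^0(\sO_A(2\Theta)\otimes\alpha)=2^g$, and there is no reason the Euler characteristic of $\J(c\cdot\I)(2\Theta)$ should reach that value). No contradiction emerges, and the ``more quantitative comparison'' you hope for does not materialize from cgg technology alone.

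What the paper does instead is squeeze one extra twist of $\Theta$ out of the generation step. You invoke Theorem~\ref{maintheta}(i) to get $\I(r\Theta)$ globally generated; the paper uses that already $\I((r-1)\Theta)$ is M--regular, hence continuously globally generated (Example~\ref{Mregccg}), hence, by Debarre's Proposition~\ref{Debarre}, honestly globally generated after pullback along a finite \'etale cover $\pi\colon\tilde A\to A$. Running your Nadel decomposition on $\tilde A$ with $D=\pi^*((r-1)\Theta)$ in place of $\pi^*(r\Theta)$, the coefficient in front of the ample part becomes $1-c(r-1)=1-cr+c$, which stays strictly positive throughout $cr\leq 1$. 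Since $\J(c\cdot\pi^*\I)=\pi^*\J(c\cdot\I)$ for \'etale $\pi$ (\cite{Laz}*{Example 9.5.44}), pushing forward shows $\J(c\cdot\I)(\Theta)$ is M--regular, and then either Proposition~\ref{geqthree} or your own section-counting finishes. The missing idea, in one line: M--regularity gives continuous global generation, and via the cover trick that is worth global generation one $\Theta$ earlier than Theorem~\ref{maintheta}(i) provides by itself.
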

This reinforces the idea that there should be a mysterious analogy between the geometry of $\mathbb{P}^n$ and that of principally polarized abelian varieties. This appears already in the work of Debarre \cite{De1995, De1995b} and then was strengthened by Pareschi--Popa (see \cite{PP2008c}*{Section 2} and  \cite{PP2008b}) \par
The paper is organized as follows: in the first section we recall the definition of the multiplier ideal and of the log--canonical threshold of an ideal sheaf on a smooth projective variety. We also give a very brief overview of the theory of $\Theta$-regularity for principally polarized abelian varieties and of continuosly generated sheaves. We then turn to the proof of the main theorem. We first find a lower bound for the $\Theta$-regularity index of nontrivial ideal sheaves on ppav. As a consequence of this we can give a new proof of a celebrated result of Ein--Lazarsfeld on the singularities of pluri-theta-divisors. Then we prove an upper bound for the $\Theta$-regularity index of multiplier ideals, by following a similar argument to the one of \cite{KP}. This together with Nadel Vanishing will allow us to conclude. We end the paper by computing some examples. The firs one, that was suggested to us by Z. Jiang, shows that the inequality provided is sharp.
\subsection*{Notation and conventions}
We work over the field of the complex numbers. Given a morphism $\mu:\widetilde{X}\rightarrow X$ of smooth projective varieties, we denote the relative canonical divisor by 
$$K_{\widetilde{X}/X}:=K_{\widetilde{X}}-\mu^*K_X,$$
where $K_X$ and $K_{\widetilde{X}}$ stand for the canonical classes on $X$ and $\widetilde{X}$ respectively. The round up and round down of rational numbers and $\mathbb{Q}$-divisors are denoted by $\lceil\cdot\rceil$ and $\lfloor\cdot\rfloor$.
\subsection*{Acknowledgements} This project was carried out when the first author was a master student at the University of Bergen under the supervision of the second named author. The first author is grateful to the Department of Mathematics at the UiB for the stimulating learning environment it provided. In addition he would like to thank E. Ferrari, T. Lundemo, G. Scattareggia and M. Vodrup for engaging in many mathematical discussions. Both authors are grateful to M. Gulbrandsen and A. L. Knutsen who were in the final examination committee of the first named author and who provided many comments and suggestions on the thesis on which this article is based. They also want to thanks the unknown referee for spotting a minor problem in the proof of Proposition \ref{ThetaRegBounds}. The second author was partially supported by the grant 261756 of the Research Councils of Norway. She wants also to thank Z. Jiang for suggesting example \ref{ex} (a), L. Lombardi for pointing the paper \cite{LW}, and  Tommaso de Fernex for some very useful correspondence.
\section{Set up and definitions}\label{sec1}

\subsection{Log--canonical thresholds and multiplier ideals} In this section we introduce multiplier ideals and log--canonical thresholds.\par Let $X$ be a smooth projective variety over the complex numbers. Consider $\mathcal{I}$ an ideal sheaf on $X$. A \emph{log--resolution} of $\mathcal{I}$ is a projective birational map $\mu : \widetilde{X}\longrightarrow X$, where $\widetilde{X}$ is a smooth variety, such that the  the inverse image $\mu^{-1}\mathcal{I}\cdot\mathcal{O}_{\widetilde{X}}\simeq\mathcal{O}_{\widetilde{X}}(-E)$ for some effective divisor $E$ and such that the divisor $E + \text{Except}(\mu)$ has simple normal crossing support. Given such a $\mu$ and a nonnegative rational number $c$ we can define the \emph{multiplier ideal sheaf} associated to $\mathcal{I}$ and $c$:
$$\mathcal{J}\left(c\cdot\mathcal{I}\right):=\mu_*\left(\mathcal{O}_{X}(K_{\widetilde{X}/X}-\lfloor cE\rfloor )\right)\subseteq\mathcal{O}_X.$$
As $c$ goes to zero, we see that the multiplier ideal $\mathcal{J}\left(c\cdot\mathcal{I}\right)$ gets bigger and bigger. We define the \emph{log--canonical threshold} of $\mathcal{I}$ to be
$$
\mathrm{lct}(\mathcal{I}):=\inf\{c\in\mathbb{Q}\;|\;\mathcal{J}\left(c\cdot\mathcal{I}\right)\not{\simeq}\mathcal{O}_X\}.
$$

\subsection{Regularity on abelian varieties}In this paragraph we recall the definition of M--regular sheaves on abelian varieties and we present some of their properties.\par Let  $A$ be an abelian variety and denote by $A^\vee$ its dual abelian variety. Given a coherent sheaf $\mathcal{F}$ on $A$ we can consider its \emph{$i$-th cohomological support locus}
$$V^i(A,\mathcal{F}):=\{\alpha\in A^\vee\;|\;H^i(A,\mathcal{F}\otimes\alpha)\neq 0\}.
$$
These are Zariski closed subsets of $A^\vee$. We say that the sheaf $\mathcal{F}$ is \emph{M--regular} if for every positive index $i$ we have that
$$
\mathrm{codim}_{A^\vee}V^i(A,\mathcal{F})>i.
$$
An important and useful property of M--regular sheaves is the following:
\begin{prop}\label{???}
 The Euler characteristic of an M--regular sheaf is always positive.
\end{prop}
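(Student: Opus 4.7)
My strategy is to find a twist $\mathcal{F}\otimes\alpha$ with $\alpha\in\Pico{A}$ whose higher cohomology vanishes, and then exploit the fact that $\chi$ is constant under such twists because $\alpha$ is numerically (in fact algebraically) trivial.

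The first step is completely formal. By upper semi-continuity of cohomology in flat families, each locus $V^{i}(A,\mathcal{F})$ is Zariski closed in $A^{\vee}$. The M-regularity hypothesis $\codim_{A^{\vee}} V^{i}(A,\mathcal{F})>i$ for $i\geq 1$ forces each such $V^{i}$ to be a \emph{proper} closed subvariety, so their finite union is also a proper closed subset of $A^{\vee}$. Picking any $\alpha$ in its complement, we have $h^{i}(A,\mathcal{F}\otimes\alpha)=0$ for all $i\geq 1$, and therefore
$$\chi(A,\mathcal{F})\;=\;\chi(A,\mathcal{F}\otimes\alpha)\;=\;h^{0}(A,\mathcal{F}\otimes\alpha)\;\geq\;0.$$

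The main obstacle is to upgrade this to a strict inequality, i.e.\ to exclude the possibility that $V^{0}(A,\mathcal{F})$ is itself a proper subvariety of $A^{\vee}$. For this I would appeal to the continuous global generation property of M-regular sheaves (to be recalled in the next subsection, following Pareschi--Popa): if $\mathcal{F}\neq 0$ is M-regular, then for every nonempty open $U\subseteq A^{\vee}$ there exists $\alpha\in U$ with $H^{0}(A,\mathcal{F}\otimes\alpha)\neq 0$. Thus $V^{0}(A,\mathcal{F})$ is a dense closed subset of $A^{\vee}$, hence equal to $A^{\vee}$. Applying this to the generic $\alpha$ chosen above gives $h^{0}(A,\mathcal{F}\otimes\alpha)>0$, and therefore $\chi(\mathcal{F})>0$.

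If one prefers to avoid forward references, the same positivity can be obtained via the Fourier--Mukai transform $\hat{S}(\mathcal{F})$: the M-regularity condition is equivalent to a WIT-type statement on the derived dual of $\hat{S}(\mathcal{F})$, from which one reads that $\hat{S}(\mathcal{F})$ has a nonzero generic rank equal to $\chi(\mathcal{F})$. Either route, the non-trivial content is precisely the passage from $\chi\geq 0$ to $\chi>0$.
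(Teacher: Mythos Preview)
Your argument is correct. The paper itself does not give a proof but simply cites \cite{Pa2012}*{Lem.\ 1.7 and Lem.\ 1.12(b)}, whose content is the Fourier--Mukai characterization you allude to at the end: M--regularity forces the Fourier--Mukai transform of the derived dual to be a (nonzero) torsion--free sheaf, whose generic rank computes $\chi(\mathcal{F})$. Your primary route is genuinely different and more elementary: the $\chi\geq 0$ step is immediate from the codimension hypothesis plus deformation--invariance of $\chi$, and for strict positivity you invoke continuous global generation (Example~\ref{Mregccg}) to force $V^{0}(A,\mathcal{F})=A^{\vee}$. This is a forward reference in the paper's ordering, but there is no circularity, since \cite{PP2003}*{Prop.\ 2.13} does not rely on the present statement. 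The trade--off is that the cgg argument is shorter and avoids any Fourier--Mukai machinery, while the cited approach sits inside a broader framework that also handles the weaker GV condition. One small point: your argument tacitly assumes $\mathcal{F}\neq 0$ (otherwise $\chi=0$); this is harmless for the application in Proposition~\ref{geqthree}, where both sheaves in play are visibly nonzero.
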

\begin{proof}
 Immediate from \cite{Pa2012}*{Lem 1.7 and Lem 1.12 (b)}.
\end{proof}

Turning our attention to a principally polarized abelian variety, $(A,\Theta)$ where $\Theta$ is a symmetric theta-divisor, we say that 
a coherent sheaf $\mathcal{F}$ on $A$  is called \emph{$m$--$\Theta$--regular} if $\mathcal{F}((m-1)\Theta)$ is M--regular. If $m=0$ the sheaf is simply called \emph{$\Theta$--regular}. The following theorem shows that $m$--$\Theta$--regular sheaves behaves under some aspect as Castelnuovo--Mumford regular sheaves on $\mathbb{P}^n$.

\begin{theorem}[{\cite{PP2003} Theorem 6.3}]\label{maintheta}
 Suppose $\mathcal{F}$ is a $\Theta$--regular sheaf on a principally polarized abelian variety $(A,\Theta)$. The following statements hold:
 \begin{enumerate}
\item $\mathcal{F}$ is globally generated;
\item  $\mathcal{F}$ is m--$\Theta$--regular for any $m\geq 1$;
\item the multiplication map $$H^{0}(\mathcal{F}(\Theta))\otimes H^{0}(\mathcal{O}_A(k\Theta))\longrightarrow H^{0}(\mathcal{F}((k+1)\Theta))$$  is surjective whenever $k\geq 2$.
\end{enumerate}
\end{theorem}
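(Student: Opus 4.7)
The overall plan is to exploit the Pareschi--Popa machinery of continuous global generation (CGG), whose backbone is the implication \emph{M--regular $\Rightarrow$ CGG}, together with the product principle: if $\mathcal{G}$ is CGG and $\mathcal{L}$ is a CGG line bundle admitting a nonzero global section, then $\mathcal{G}\otimes \mathcal{L}$ is globally generated. Throughout the argument the hypothesis is used in its defining form, namely that $\mathcal{F}(-\Theta)$ is M--regular, and hence CGG.

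For part (i), I would observe that $\mathcal{O}_A(\Theta)$ is itself CGG (its translates cover every point of $A$) and carries the canonical section cutting out $\Theta$. Applying the product principle to the tautological identification $\mathcal{F} \cong \mathcal{F}(-\Theta)\otimes \mathcal{O}_A(\Theta)$ immediately delivers global generation of $\mathcal{F}$.

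For part (ii), I would argue by induction on $m$. The substantive step is to upgrade the M--regularity of $\mathcal{F}(-\Theta)$ to the M--regularity of $\mathcal{F}((m-2)\Theta)$ for every $m\ge 2$. Tensoring the restriction sequence $0\to \mathcal{O}_A(-\Theta)\to \mathcal{O}_A\to \mathcal{O}_\Theta\to 0$ with $\mathcal{F}((m-1)\Theta)\otimes \alpha$ for $\alpha\in A^\vee$ and chasing the long exact sequence gives a lower bound on $\codim V^i(A, \mathcal{F}((m-2)\Theta))$ coming from the inductive M--regularity of $\mathcal{F}((m-3)\Theta)$ together with the Kodaira--type vanishing contributed by the ample twist $\mathcal{O}_A(\Theta)$. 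Iterating, the higher cohomological support loci shrink sufficiently fast to guarantee the required codimension inequality.

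Part (iii) is the main obstacle, and I would address it by adapting the classical Castelnuovo--Mumford surjectivity argument to the ppav setting. By (ii), $\mathcal{F}(\Theta)$ is $\Theta$--regular, and by (i) the evaluation map is surjective, yielding
\begin{equation*}
0\to \mathcal{M}\to H^0(\mathcal{F}(\Theta))\otimes \mathcal{O}_A\to \mathcal{F}(\Theta)\to 0.
\end{equation*}
Twisting by $\mathcal{O}_A(k\Theta)$ and taking the long exact sequence, the multiplication map in (iii) is surjective for $k\ge 2$ if and only if $H^1(A,\mathcal{M}(k\Theta))=0$ for such $k$. This reduces part (iii) to verifying that the syzygy sheaf $\mathcal{M}$ is $2$--$\Theta$--regular, i.e.\ $\mathcal{M}(\Theta)$ is M--regular. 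The delicate point is controlling $V^i(A, \mathcal{M}(\Theta))$ at the origin of $A^\vee$, where M--regularity only provides generic vanishing; I would resolve this by iterating the product principle and the CGG property in the style of Pareschi--Popa, combined with the strong cohomological vanishings made available by (ii) for $\mathcal{F}$, for $\mathcal{F}(\Theta)$, and for all sufficiently positive powers $\mathcal{O}_A(k\Theta)$. Once $\mathcal{M}$ is shown to be $2$--$\Theta$--regular, (ii) applied to $\mathcal{M}$ propagates the vanishing to all $k\ge 2$ and closes the argument.
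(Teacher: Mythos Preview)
The paper does not contain a proof of this theorem: it is quoted verbatim from \cite{PP2003}*{Theorem 6.3} as a background result, and no argument is given. There is therefore no ``paper's own proof'' to compare your proposal against.

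On the substance of your sketch: part (i) is correct and is exactly the Pareschi--Popa argument. Part (ii), however, has a genuine gap. You try to imitate the hyperplane--section induction from the Castelnuovo--Mumford setting via the sequence $0\to\mathcal{O}_A(-\Theta)\to\mathcal{O}_A\to\mathcal{O}_\Theta\to 0$, but after tensoring with $\mathcal{F}((m-1)\Theta)\otimes\alpha$ the long exact sequence only bounds $V^i(\mathcal{F}((m-1)\Theta))$ in terms of $V^i(\mathcal{F}((m-2)\Theta))$ \emph{and} the cohomological support loci of the restriction $\mathcal{F}((m-1)\Theta)|_\Theta$, over which you have no control whatsoever. (In the $\mathbb{P}^n$ case one inducts on dimension and knows the restriction to a hyperplane is again regular; no such recursion is available on the theta divisor.) Your indexing is also off: you invoke the ``inductive M--regularity of $\mathcal{F}((m-3)\Theta)$'' to bound $V^i(\mathcal{F}((m-2)\Theta))$, which is the wrong direction. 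The actual argument in \cite{PP2003} proceeds differently: one shows, via Fourier--Mukai and generic--vanishing techniques, that an M--regular sheaf twisted by $\mathcal{O}_A(\Theta)$ satisfies I.T.\ with index $0$ (i.e.\ $H^i(\mathcal{F}\otimes\alpha)=0$ for all $i>0$ and all $\alpha$), which is strictly stronger than M--regularity and trivially propagates to further twists. Part (iii) is in the right spirit but the crucial step---establishing that the syzygy sheaf $\mathcal{M}$ is $2$--$\Theta$--regular---is asserted rather than argued; ``iterating the product principle'' does not by itself yield the required codimension bounds on $V^i(\mathcal{M}(\Theta))$.
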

Thanks to this result we may introduce the \emph{$\Theta$-regularity index} for a coherent sheaf $\mathcal{F}$ on a principally polarized abelian variety ($A$,$\Theta$): 
$$\Theta\text{--reg}(\mathcal{F}) = \text{inf} \{ m\in \mathbb{Z}\; |\; \mathcal{F} \text{ is } m\text{--}\Theta\text{--regular} \}$$
\subsection{Continuously generated sheaves}
Following Pareschi--Popa \cite{PP2003} we say that a sheaf $\mathcal{F}$ on an irreducible projective variety is \emph{continuously globally generated} (in brief \emph{cgg}), if for every open subsete $U\subseteq\mathrm{Pic}^0(X)$ we have that the following map is surjective
$$
\xymatrix{\bigoplus_{\alpha\in U}\;H^0(X,\mathcal{F}\otimes\alpha)\otimes\alpha^{-1}\ar[rrr]^{\hspace{2cm}\mathrm{ev}_{\mathcal{F}\otimes\alpha}\otimes\mathrm{id}}&&&\mathcal{F}}.
$$
\begin{ex}\label{Mregccg}
 By \cite{PP2003}*{Proposition 2.13} we know that M--regular sheaves are continuosly globally generated.
\end{ex}
In what follows we will need the following characterzations of cgg sheaves, due to Debarre:
\begin{prop}[{\cite{De2006}*{Proposition 3.1}}]\label{Debarre} Let $\mathcal{F}$ be a sheaf on an irriducibile projective varity $X$. Then $\mathcal{F}$ is continuosly globally generated if, and only if, there exist a connected abelian Galois \'etale finite cover $\pi:\tilde X\rightarrow X$ such that $\pi^*(\mathcal{F}\otimes\alpha)$ is globally generated for every $\alpha\in\mathrm{Pic}^0(X)$.
 
\end{prop}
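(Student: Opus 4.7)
The plan is to establish the two implications separately, using in both cases the Galois decomposition of $\pi_*\mathcal{O}_{\tilde X}$ attached to an abelian Galois \'etale cover $\pi:\tilde X\to X$ with group $G$: one has $\pi_*\mathcal{O}_{\tilde X}\cong\bigoplus_{\chi\in G^\vee}\mathcal{L}_\chi$ with each $\mathcal{L}_\chi\in\mathrm{Pic}^0(X)$ a torsion character (factoring through the Albanese variety).

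For the implication $(\Leftarrow)$, fix $\pi$ as in the statement and let $U\subseteq\mathrm{Pic}^0(X)$ be a nonempty open subset. The finite intersection $U':=\bigcap_{\chi\in G^\vee}(U\otimes\mathcal{L}_\chi^{-1})$ of translates of $U$ is still nonempty because $\mathrm{Pic}^0(X)$ is irreducible. For $\alpha\in U'$, I would apply $\pi_*$ to the evaluation surjection $H^0(\tilde X,\pi^*(\mathcal{F}\otimes\alpha))\otimes\mathcal{O}_{\tilde X}\twoheadrightarrow\pi^*(\mathcal{F}\otimes\alpha)$, and then use the projection formula and the Galois decomposition to isolate the trivial-character isotypical component on the right, obtaining a surjection
\[
\bigoplus_{\chi\in G^\vee}H^0\bigl(X,\mathcal{F}\otimes(\alpha\otimes\mathcal{L}_\chi)\bigr)\otimes(\alpha\otimes\mathcal{L}_\chi)^{-1}\twoheadrightarrow\mathcal{F}.
\]
By construction every twisting character $\alpha\otimes\mathcal{L}_\chi$ lies in $U$, so $\mathcal{F}$ is realized as a quotient of $\bigoplus_{\beta\in U}H^0(\mathcal{F}\otimes\beta)\otimes\beta^{-1}$, establishing cgg.

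For the implication $(\Rightarrow)$, I would start from the cgg hypothesis and extract, by coherence of $\mathcal{F}$ and a Noetherian argument, a finite subfamily $\alpha_1,\dots,\alpha_n\in\mathrm{Pic}^0(X)$ yielding a surjection $\bigoplus_i H^0(\mathcal{F}\otimes\alpha_i)\otimes\alpha_i^{-1}\twoheadrightarrow\mathcal{F}$. Surjectivity is an open condition on $(\alpha_1,\dots,\alpha_n)$, and torsion points are Zariski-dense in $\mathrm{Pic}^0(X)$, so each $\alpha_i$ may be taken torsion. Letting $N$ be a common order, I would define $\pi:\tilde X\to X$ as the pullback through the Albanese map of the multiplication-by-$N$ isogeny on $\mathrm{Alb}(X)$; this is a connected abelian Galois \'etale cover on which $\pi^*\alpha_i\cong\mathcal{O}_{\tilde X}$ for every $i$, and the pulled-back surjection shows that $\pi^*\mathcal{F}$ is globally generated.

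The main obstacle is to upgrade global generation of $\pi^*\mathcal{F}$ to that of $\pi^*(\mathcal{F}\otimes\alpha)$ for \emph{every} $\alpha\in\mathrm{Pic}^0(X)$, not only for $\alpha=\mathcal{O}_X$. The plan is to iterate: each twist $\mathcal{F}\otimes\alpha$ is itself cgg, and the sufficiency direction (now applied to $\pi^*\mathcal{F}$ on $\tilde X$) identifies the locus $\{\alpha\in\mathrm{Pic}^0(X)\mid\pi^*(\mathcal{F}\otimes\alpha)\text{ is globally generated}\}$ as a nonempty open subset of $\mathrm{Pic}^0(X)$ stable under translations by a finite subgroup. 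A Noetherian induction --- enlarging $N$ at each step to trivialize further torsion and thereby shrink the closed complement --- terminates in finitely many stages and produces a single cover that works uniformly in $\alpha$.
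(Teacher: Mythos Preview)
The paper does not contain a proof of this proposition: it is quoted verbatim from Debarre \cite{De2006}*{Proposition 3.1} and used as a black box in the proof of Proposition \ref{ThetaRegBounds}. There is therefore nothing in the paper to compare your argument against.

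That said, your outline is essentially Debarre's argument and is correct. Two points deserve tightening. First, in the $(\Leftarrow)$ direction you need the characters $\mathcal{L}_\chi$ to lie in $\mathrm{Pic}^0(X)$, not merely to be torsion; for an arbitrary irreducible projective $X$ this requires the cover to factor through the Albanese (which you note parenthetically but should state as a hypothesis), while for abelian varieties it is automatic because $\mathrm{NS}(A)$ is torsion-free. Second, the Noetherian induction you sketch for $(\Rightarrow)$ does work, but the mechanism should be made explicit: writing $Z_N\subseteq\mathrm{Pic}^0(X)$ for the closed locus where $\pi_N^*(\mathcal{F}\otimes\alpha)$ fails to be globally generated, pick $\alpha_0\in Z_N$, use cgg of $\mathcal{F}\otimes\alpha_0$ together with density of torsion to find torsion classes $\gamma_j$ (of order dividing some $M$) giving a surjective evaluation at $\alpha_0$, and invoke openness of surjectivity to get a neighbourhood $V\ni\alpha_0$ on which the same $\gamma_j$ work; then $Z_{NM}\subseteq Z_N\setminus V\subsetneq Z_N$, and the strictly descending chain of closed subsets terminates.
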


\section{Proof of the main theorem}\label{sec2}
\subsection{Bounds on the $\Theta$-regularity index}
In this paragraph we present some preliminary results which we will need to prove Theorem \ref{MainResultThetaLCT}. In particular we give upper bounds and lower bounds for the $\Theta$--regularity index of an ideal sheaf. We begin with a lower bound:
\begin{prop}\label{geqthree}Let $\mathcal{I}\neq \mathcal{O}_{A}$ be a nonzero ideal sheaf on $A$. Then $\mathcal{I}$ cannot be $m$--$\Theta$--regular for $m<3$. In other words, $\Theta$--reg$(\mathcal{I}) \geq 3$.
\end{prop}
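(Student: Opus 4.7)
The plan is to rule out $m$--$\Theta$--regularity for each $m\in\{0,1,2\}$ uniformly, by showing that the twisted ideal sheaf $\mathcal{I}((m-1)\Theta)$ fails to be continuously globally generated; since M--regular sheaves are cgg (Example~\ref{Mregccg}), this will suffice.

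The strategy is to exhibit, in each of the three cases, a nonempty Zariski open subset $U\subseteq A^{\vee}$ on which $H^{0}(A,\mathcal{I}((m-1)\Theta)\otimes\alpha)=0$. Granting this, the evaluation map
$$
\bigoplus_{\alpha\in U} H^{0}\bigl(A,\mathcal{I}((m-1)\Theta)\otimes\alpha\bigr)\otimes\alpha^{-1}\;\longrightarrow\;\mathcal{I}((m-1)\Theta)
$$
has zero source but nonzero target, contradicting the definition of cgg.

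To produce such a $U$ I would use the inclusion $H^{0}(A,\mathcal{I}((m-1)\Theta)\otimes\alpha)\hookrightarrow H^{0}(A,\mathcal{O}_A((m-1)\Theta)\otimes\alpha)$ induced by $\mathcal{I}\hookrightarrow\mathcal{O}_A$. For $m=0$ the line bundle $\mathcal{O}_A(-\Theta)\otimes\alpha$ is the inverse of an ample bundle on a positive-dimensional variety, so its $H^{0}$ vanishes for every $\alpha$, and $U=A^{\vee}$ suffices. For $m=1$ one has $H^{0}(A,\alpha)=0$ for all $\alpha\neq 0$, while for $\alpha=0$ the only global section of $\mathcal{O}_A$ is the constant $1$, which does not lie in the proper ideal $\mathcal{I}$; thus again $U=A^{\vee}$ works.

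The main obstacle is the case $m=2$: here $H^{0}(A,\mathcal{O}_A(\Theta)\otimes\alpha)\cong\mathbb{C}$ is one-dimensional for every $\alpha$, with its nonzero section cutting out a translate $\Theta_\alpha$ of $\Theta$. Thus $H^{0}(A,\mathcal{I}(\Theta)\otimes\alpha)\neq 0$ precisely when $V(\mathcal{I})\subseteq\Theta_\alpha$ scheme-theoretically. The locus $T\subseteq A^{\vee}$ of such $\alpha$ is closed by upper semicontinuity; the key geometric fact is that $T$ is a proper subset, since if some point $x\in V(\mathcal{I})$ belonged to every translate of $\Theta$ then the orbit $\{x-a:a\in A\}=A$ would lie inside $\Theta$, which is absurd. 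Hence $U=A^{\vee}\setminus T$ is a nonempty open set on which the required vanishing holds, and the proof concludes.
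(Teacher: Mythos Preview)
Your argument is correct, but it follows a different route from the paper's. The paper reduces immediately to $m=2$ via Theorem~\ref{maintheta}(ii) and then argues with Euler characteristics: from the exact sequence $0\to\mathcal{I}(\Theta)\to\mathcal{O}_A(\Theta)\to\mathcal{G}(\Theta)\to 0$ one checks that if $\mathcal{I}(\Theta)$ were M--regular then so would $\mathcal{G}(\Theta)$ be, and since M--regular sheaves have strictly positive Euler characteristic (Proposition~\ref{???}) this contradicts $\chi(\mathcal{O}_A(\Theta))=1$.

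Your approach instead exploits the implication M--regular $\Rightarrow$ cgg and the concrete geometry of the principal polarization: the unique section of $\mathcal{O}_A(\Theta)\otimes\alpha$ cuts out a translate of $\Theta$, and since no point lies on every translate, $H^0(\mathcal{I}(\Theta)\otimes\alpha)$ vanishes generically. This is a more hands-on argument that avoids the Euler-characteristic input, trading it for the cgg machinery and the explicit identification of $A^\vee$ with translates of $\Theta$. Both proofs rest on comparably deep pieces of the Pareschi--Popa theory; yours is perhaps more transparent geometrically, while the paper's is shorter and more formal. A small redundancy: your separate treatment of $m=0,1$ is unnecessary once you invoke Theorem~\ref{maintheta}(ii), as the paper does.
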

\begin{proof}
By Theorem \ref{maintheta} b), it suffices to prove the statement for the case $m = $ 2. So assume for a contradiction that $\mathcal{I}$ is a 2--$\Theta$--regular ideal sheaf, and consider the associated short exact sequence: $$0\longrightarrow \mathcal{I}\longrightarrow\mathcal{O}_{A}\longrightarrow \mathcal{G}\longrightarrow 0$$ where $\mathcal{G} = \mathcal{O}_A/\mathcal{I}$. Twisting this sequence with $\mathcal{O}_A(\Theta)$ we get
$$0\longrightarrow \mathcal{I} (\Theta)\longrightarrow \mathcal{O}_A(\Theta)\longrightarrow \mathcal{G}(\Theta)\longrightarrow 0.$$ 
Since $\mathcal{I}(\Theta)$ is M-regular by assumption, then we see that the same holds for $\mathcal{G}(\Theta)$. In fact the long exact sequence in cohomology yields that
$$
V^i(A,\mathcal{G}(\Theta))\subseteq V^{i+1}(A,\mathcal{I}(\Theta))\cup V^{i}(A,\mathcal{O}_A(\Theta)),$$
thus we conclude by observing that  the loci $V^{i}(A,\mathcal{O}_A(\Theta))$ are empty for every positive $i$. On the other side, the additivity of the Euler characteristic gives the equality 
\begin{equation}\label{EulerCharAdditive}
1 = \chi (\mathcal{O}_A(\Theta)) = \chi (\mathcal{I}(\Theta)) + \chi (\mathcal{G}(\Theta ))
\end{equation}
We conclude by observing that by Proposition \ref{???} the Euler characteristic of M--regular sheaves is always positive, and so we get an immediate contradiction to \eqref{EulerCharAdditive}. Hence $\mathcal{I}$ cannot be 2--$\Theta$--regular.
\end{proof} 
As a corollary of this lower bound for the $\Theta$-regularity index of a non-trivial sheaf of ideals, we can give a new proof of a celebrated result of Ein-Lazarsfeld:
\begin{theorem}[{Ein--Lazarsfeld \cite{EL19997}}]\label{PluriThetaSing}
Let $(A,\Theta)$ be a principally polarized abelian variety and $m\geq 1$. If we fix any divisor $D\in |m\Theta|$, then $\frac{1}{m}D$ is log--canonical. 
\end{theorem}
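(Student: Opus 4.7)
The plan is to derive Theorem \ref{PluriThetaSing} from Proposition \ref{geqthree} by a contradiction argument powered by Nadel's vanishing theorem. Suppose for contradiction that $\frac{1}{m}D$ is not log--canonical: unwinding the definition of $\mathrm{lct}$ I would produce a rational number $c<1$ such that the multiplier ideal
$$\mathcal{I}:=\mathcal{J}\!\left(\tfrac{c}{m}\cdot D\right)$$
is a proper ideal sheaf on $A$, i.e.\ $\mathcal{I}\ne \mathcal{O}_A$.

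Next I would translate this into a cohomological statement. For every $\alpha\in\mathrm{Pic}^0(A)$ the $\mathbb{Q}$-divisor $\Theta+\alpha-\frac{c}{m}D$ is numerically equivalent to $(1-c)\Theta$, which is ample since $c<1$. Nadel vanishing therefore yields
$$H^i\!\bigl(A,\mathcal{I}(\Theta)\otimes\alpha\bigr)=0 \qquad \text{for every } i>0$$
and for every $\alpha\in\mathrm{Pic}^0(A)$. Consequently the cohomological support loci $V^i(A,\mathcal{I}(\Theta))$ are empty for every $i>0$, which forces $\mathcal{I}(\Theta)$ to be M--regular; equivalently, $\mathcal{I}$ is $2$--$\Theta$--regular.

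At this point the contradiction is immediate from Proposition \ref{geqthree}: since $\mathcal{I}$ is a nontrivial ideal sheaf on $(A,\Theta)$, its $\Theta$-regularity index must be at least $3$, in direct conflict with the $2$--$\Theta$--regularity just established. Hence $\frac{1}{m}D$ must be log--canonical after all.

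The main delicate point is the bookkeeping around Nadel vanishing: one has to pin down that the ampleness check and the compatibility with every twist by $\alpha\in\mathrm{Pic}^0(A)$ can be handled simultaneously, so that the conclusion is vanishing of all $V^i(A,\mathcal{I}(\Theta))$, which is exactly what feeds Proposition \ref{geqthree}. Beyond this verification the argument is simply a chain of definitional implications.
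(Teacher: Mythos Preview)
Your proposal is correct and follows essentially the same route as the paper: apply Nadel vanishing to $\mathcal{J}(\tfrac{c}{m}D)(\Theta)\otimes\alpha$ for all $\alpha\in\mathrm{Pic}^0(A)$ (using that $\Theta+\alpha-\tfrac{c}{m}D\equiv_{\mathrm{num}}(1-c)\Theta$ is ample), deduce that the multiplier ideal is $2$--$\Theta$--regular, and then invoke Proposition~\ref{geqthree}. The paper phrases this directly (showing $\mathcal{J}(\tfrac{1-\epsilon}{m}D)=\mathcal{O}_A$ for every $0<\epsilon<1$) rather than by contradiction, but the argument is the same.
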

\begin{proof}
We want to show that for every rational $\epsilon$, $0<\epsilon<1$ the multiplier ideal associated to the $\mathbb{Q}$ -divisor $\frac{1-\epsilon}{m}D$, $\mathcal{J}\left(\frac{1-\epsilon}{m}D\right)$ (cfr. \cite{Laz}*{Definition 9.2.1}) is $\mathcal{O}_A$. We will do so by proving that its $\Theta$--regularity index is at most 2, contradicting Proposition \ref{geqthree} above.\par
To this aim choose $L = \Theta + F$, where $F$ is a divisor associated to an element $\alpha \in \text{Pic}^{0}(A)$.  We choose $E$ an effective divisor linearly equivalent to $m\Theta$ and $c = \frac{1-\epsilon}{m}$ for a rational number $0 <\epsilon <1$. Then $$L-cE \sim \Theta + F - \frac{1-\epsilon}{m}m\Theta = \epsilon\Theta + F$$  which is an ample $\mathbb{Q}$--divisor and hence nef and big.  For $D\in |m\Theta|$, by Nadel Vanishing for multiplier ideals (\cite{Laz}*{Theorem 9.4.8}) we have that
$$H^{i}\bigg( A,\sO_{A}(\Theta)\otimes \alpha \otimes \mathcal{J}\bigg(\frac{1-\epsilon}{m}D\bigg)\bigg) = 0 \text{ for } i>0.$$ 
In particular $\sO_{A}(\Theta)\otimes \mathcal{J}\big(\frac{1-\epsilon}{m}D\big)$ is an M--regular sheaf (all the higher cohomology support loci are empty) and so we have that  $\mathcal{J}\left(\frac{1-\epsilon}{m}D\right)$ is 2-$\Theta$-regular. Proposition \ref{geqthree} implies that $$\mathcal{J}\left(\frac{1-\epsilon}{m}D\right) \simeq \sO_A,$$ 
and concludes the proof.
\end{proof}
Going back to the $\Theta$--regularity index, we now provide an upper bound for it in the special case of multiplier ideal sheaves:
\begin{prop}\label{ThetaRegBounds}
Let $\mathcal{I}$ be a non--trivial sheaf of ideals with $m = \Theta$--\emph{reg}$(\mathcal{I})$. If $c$ is a rational number $0<c<1$, then we have the following bound
$$\Theta-\mathrm{reg}(\mathcal{J}(c\cdot\mathcal{I})) \leq  \lceil cm\rceil + 1$$
\end{prop}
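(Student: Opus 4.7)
The plan is to show that $\mathcal{J}(c\cdot\mathcal{I})\otimes\mathcal{O}_A(\lceil cm\rceil\Theta)$ is M--regular on $A$; by the definition of the $\Theta$--regularity index, this is exactly the claimed inequality. I would fix a log resolution $\mu:\widetilde{A}\to A$ of $\mathcal{I}$ with $\mu^{-1}\mathcal{I}=\mathcal{O}_{\widetilde{A}}(-E)$ for some effective divisor $E$ of SNC support, and apply Nadel vanishing---considerably simplified on $A$ by the triviality of $K_A$---to each twist $L_\alpha=\mathcal{O}_A(\lceil cm\rceil\Theta)\otimes\alpha$ with $\alpha\in\mathrm{Pic}^0(A)$.

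The positivity input comes from the regularity hypothesis on $\mathcal{I}$: by definition $\mathcal{I}((m-1)\Theta)$ is M--regular, so $\mathcal{I}(m\Theta)$ is $\Theta$--regular in the Pareschi--Popa sense and Theorem~\ref{maintheta}(1) forces it to be globally generated; pulling back, $\mathcal{O}_{\widetilde{A}}(\mu^*(m\Theta)-E)$ is globally generated on $\widetilde{A}$, hence nef.

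The Nadel hypothesis on $L_\alpha$ is that $\mu^*L_\alpha-cE$ be nef and big on $\widetilde{A}$. The decomposition
\[
\mu^*L_\alpha-cE \;=\; c\bigl(\mu^*(m\Theta)-E\bigr) \;+\; (\lceil cm\rceil-cm)\,\mu^*\Theta \;+\; \mu^*F_\alpha
\]
writes the left hand side as a nef divisor (by the previous step), plus a nonnegative multiple of $\mu^*\Theta$---nef and big since $\Theta$ is ample and $\mu$ is birational---plus a numerically trivial divisor. When $cm\notin\mathbb{Z}$ the middle coefficient is strictly positive, so the sum is nef and big; Nadel then kills $H^i(A,\mathcal{J}(c\cdot\mathcal{I})\otimes L_\alpha)$ for every $i>0$ and every $\alpha$, making the higher cohomology support loci empty and establishing M--regularity.

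The main obstacle is the boundary case $cm\in\mathbb{Z}$, where the middle summand vanishes and $\mu^*L_\alpha-cE$ remains only nef. I would handle it by a perturbation exploiting the right--continuity of multiplier ideals: $\mathcal{J}(c\cdot\mathcal{I})=\mathcal{J}((c+\epsilon)\cdot\mathcal{I})$ for $\epsilon>0$ small enough, so one can reduce to a value $c+\epsilon$ for which $(c+\epsilon)m\notin\mathbb{Z}$. The subtle point is to arrange the perturbation so that the resulting regularity bound is still the sharp $\lceil cm\rceil+1$ rather than merely $\lceil cm\rceil+2$; this plausibly requires either a more careful choice of $\epsilon$ (using the discreteness of jumping numbers, so $c$ can often be approached from below while keeping $\mathcal{J}$ constant) or a direct bigness argument for $\mu^*(m\Theta)-E$ via Pareschi--Popa methods. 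This delicate step is plausibly the ``minor problem'' in the original proof acknowledged in the paper's preamble.
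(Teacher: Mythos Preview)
Your overall strategy---prove that $\mathcal{J}(c\cdot\mathcal{I})(\lceil cm\rceil\Theta)$ is M--regular by Nadel vanishing, with the positivity fed in from the regularity hypothesis on $\mathcal{I}$---is the same as the paper's. The difference lies in \emph{how} the positivity is extracted, and your version leaves a genuine gap at the boundary case.

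You use that $\mathcal{I}(m\Theta)$ is $\Theta$--regular and hence globally generated on $A$ itself; this produces the nef divisor $\mu^*(m\Theta)-E$ and forces the decomposition whose middle coefficient is $\lceil cm\rceil-cm$, possibly zero. The paper instead exploits only that $\mathcal{I}((m-1)\Theta)$ is M--regular. That sheaf need not be globally generated on $A$, but it is \emph{continuously} globally generated, and by Debarre's characterization (Proposition~\ref{Debarre}) there is a finite abelian \'etale cover $\pi:\tilde A\to A$ on which $\pi^*\bigl(\mathcal{I}((m-1)\Theta)\bigr)$ becomes globally generated. Running Nadel on $\tilde A$ with $D=\pi^*((m-1)\Theta)$ rather than $\pi^*(m\Theta)$, the relevant coefficient becomes
\[
\lceil cm\rceil-c(m-1)\;=\;(\lceil cm\rceil-cm)+c\;\ge\;c\;>\;0,
\]
which is \emph{always} strictly positive. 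Thus $L_\alpha-cD$ is ample for every $\alpha$, the boundary case simply does not occur, and one descends back to $A$ via $\mathcal{J}(c\cdot\pi^*\mathcal{I})=\pi^*\mathcal{J}(c\cdot\mathcal{I})$ together with the splitting of $\pi_*\pi^*$.

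Your proposed perturbations do not close the gap. Right--continuity replaces $c$ by $c+\epsilon$, but then $\lceil(c+\epsilon)m\rceil=\lceil cm\rceil+1$ and you obtain only the weaker bound $\lceil cm\rceil+2$. Approaching from below works when $c$ is not a jumping number of $\mathcal{I}$, but fails exactly when it is, and nothing in the hypotheses excludes that. The trade you made---global generation directly on $A$ at the cost of one extra twist by $\Theta$---is precisely what the paper's \'etale--cover manoeuvre is designed to undo.
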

\begin{proof}
We want to show that $\mathcal{J}(c\cdot\mathcal{I})( \lceil cm\rceil\Theta)$ is M--regular so we will have necessarily that the inequality above holds. To this aim, first observe that our hypotesis ensures that $\mathcal{I}((m-1)\Theta)$ is M--regular and so, by example \ref{Mregccg} continuously globally generated. Then, thanks to Proposition \ref{Debarre}, there exist an abelian varieties $\tilde A$ and an abelian Galois \'etale cover $\pi:\tilde A\rightarrow A$ such that $\pi^*(\mathcal{I}((m-1)\Theta))$ is globally generated.\\
Now we remark that since $\pi$ is \'etale, then the pulback is exact and $\widetilde{\mathcal{I}}:=\pi^*\mathcal{I}$ is an ideal sheaf. Let $D=(\pi^*(m-1)\Theta)$, so we can write $\pi^*(\mathcal{I}((m-1)\Theta))=\widetilde{\mathcal{I}}(D)$ . Choose $\alpha$ in $\mathrm{Pic}^0(A)$ and let $P$ be a divisor representing $\alpha$. Set $L_\alpha=\lceil cm\rceil\pi^*\Theta+\pi^*P$. Observe that $\lceil cm\rceil-c(m-1)\ge c>0,$ so $L_\alpha-cD$ is an ample  divisor. In particular it is nef and big, thus we can apply Nadel Vanishing Theorem for ideal sheaves (\cite{Laz}*{Corollary 9.4.15}) and get that
\begin{equation}\label{IT0}H^{i}\left(\tilde A,\J(c\cdot\widetilde{\I})\otimes \pi^*(\sO_{ A}(\lceil cm\rceil\Theta)\otimes\alpha)\right) =H^{i}\left(\tilde A,\J(c\cdot\widetilde{\I})\otimes \sO_{\tilde A}(L_\alpha)\right)= 0,\end{equation}

\noindent for all $i>0$ and all $\alpha\in\mathrm{Pic}^0(A)$. By pushing forward we get that $\pi_*\J(c\cdot\widetilde{\I})\otimes\sO_{ A}(\lceil cm\rceil\Theta)$ is an M--regular sheaf. Now we can conclude thanks to \cite{Laz}*{Example 9.5.44}, which says that
$$
\mathcal{J}(c\cdot\widetilde{\mathcal{I}})=\mathcal{J}(c\cdot\pi^*\I)=\pi^*\J(A,c\cdot\I)\subseteq\mathcal{O}_{\tilde A}.
$$
Therefore $\J(c\cdot\I)\otimes\sO_{ A}(\lceil cm\rceil\Theta)$ is a summand of $\pi_*\J(c\cdot\widetilde{\I})\otimes\sO_{ A}(\lceil cm\rceil\Theta)$ and hence it is itself M--regular.
\end{proof}
\begin{rmk}
This result could seem in contradiction with Proposition \ref{geqthree} when $c$ is very small: in fact as $c$ goes to 0 we will have $\Theta$-regularity index of $\mathcal{J}\left(c\cdot\mathcal{I}\right)$ is bounded above by 2. On the other side when $c$ is small then $\mathcal{J}\left(c\cdot\mathcal{I}\right)\simeq\mathcal{O}_A$ whose $\Theta$--regularity index is indeed 2.
\end{rmk}

\subsection{The Proof}
 We are now ready to show the proof of Theorem \ref{MainResultThetaLCT}. Set $c = $lct$(\I)$ and note that if $c\geq 1$ then the result follows immediately from Proposition \ref{geqthree}. Otherwise,  by definition of log--canonical threshold we have that $\J(c\cdot\I) \neq \sO_{A}$, and by Propositions \ref{geqthree} and \ref{ThetaRegBounds} we get
$$3\leq \Theta \text{--reg}(\J(c\cdot\I)) \leq \lceil c(\Theta \text{--reg}(\I))\rceil + 1 < c(\Theta \text{--reg}(\I)) + 2$$
which concludes the proof.\qed\par
As a final remark, we point out that the provided inequality is indeed sharp. In fact we will give a sequence of ideals $\mathcal{I}_n$ such that 
$$
\mathrm{lct}(\mathcal{I}_n)\cdot(\Theta-\mathrm{reg}(\mathcal{I}_n))\longrightarrow 1.$$
We compute also other examples.
\begin{ex}\label{ex}(a) (Sharpness)  Let $(A,\Theta)$ a principally polarized abelian variety, and consider $\mathcal{I}_k=\mathcal{O}_A(-k\Theta)$, where $k$ is a positive integer. Then the $\Theta$--regularity index of $\mathcal{I}_k$ is $k+2$, while its log--canonical threshold is $\frac{1}{k}$. So we have that
$$
  \mathrm{lct}(\mathcal{I}_k)\cdot(\Theta-\mathrm{reg}(\mathcal{I}_k))=\frac{k+2}{k}.
  $$
As $k$ goes to infinity we have that $\mathrm{lct}(\mathcal{I}_k)\cdot(\Theta-\mathrm{reg}(\mathcal{I}_k))$ tends to 1 from above.\\
(b) Consider $C\subseteq J(C)$ an Abel--Jacobi embedded curve. Then $C$ is smooth of codimension $g-1$, where $g\geq 2$ denotes the genus of $g$. In \cite{PP2003} Pareschi--Popa computed the $\Theta$--regularity index of $\mathcal{I}_C$ and proved that it is equal to 3. On the other side we can consider the blow-up of $J(C)$ along $C$:
  $$\mu:\widetilde{J}:=\mathrm{Bl}_C(J(C))\rightarrow J(C).$$
  This is a log--resolution of $\mathcal{I}_C$. Denote by $E$ its exceptional divisor. We have that $\mu^{-1}\mathcal{I}_C\cdot\mathcal{O}_{\widetilde{J}}\simeq\mathcal{O}_{\widetilde{J}}(-E)$ while the relative canonical divisor is given by $K_{\widetilde{J}/J(C)}\sim (g-2)E$. Then we have that
  $$\mathrm{lct}(\mathcal{I}_C)=\inf\{c\in\mathbb{Q}\;|\;g-2-\lfloor c\rfloor<0\}=g-1.$$
  We conclude that 
  $$
  \mathrm{lct}(\mathcal{I_C})\cdot(\Theta-\mathrm{reg}(\mathcal{I}_C))=3g-3\geq3
  $$
 (c) Consider $X\subseteq\mathbb{P}^4$ a smooth cubic threefold and let $J(X)$ be its intermediate Jacobian. Then $J(X)$ is a five dimensional abelian variety which admits a principal polarization $\Theta$. The scheme $S$ parametrizing lines in $\mathbb{P}^4$ which are contained in $X$ is a smooth surface (called \emph{Fano surface of lines} of $X$) which admits an embedding $i:S\hookrightarrow J(X)$. In \cite{Ho} H\"oring showed that  $\Theta-\mathrm{reg}(\mathcal{I}_S)=3$. On the other side, a similar argument to that used for Abel--Jacobi embedded curves will show that $\mathrm{lct}(\mathcal{I})=3$. So we get
 $$
  \mathrm{lct}(\mathcal{I_S})\cdot(\Theta-\mathrm{reg}(\mathcal{I}_S))=9
  $$

\end{ex}
We conclude this paper with a couple of open questions left unanswered.

\begin{Question} The $\Theta$--regularity index of Brill--Noether loci $W_d$ inside Jacobians is it known to be 3 (see for example \cite{PP2003}).
 What is the log--canonical threshold of $\mathcal{I}_{W_d}$? More generally in \cite{LW} the authors give a bound for the $\Theta$-regularity of other Brill-Noether loci (for example thos associated to Petri general curves). What is the log--canonical threshold of these loci?
\end{Question}
\begin{Question}The example above shows that when the zero locus of the ideal sheaf $\mathcal{I}$ is reduced, then values much higher of the provided bound are achieved.  Is it possible that 
 $$3 \le \text{\emph{lct}}(\I)(\Theta \text{\emph{--reg}}(\I))$$
 for every ideal sheaf $\mathcal{I}$ on a principally polarized abelian variety whose zero locus is reduced?
\end{Question}

\begin{bibdiv}
\begin{biblist}
\bib{De1995}{article}{
    AUTHOR = {Debarre, Olivier},
     TITLE = {Th\'{e}or\`emes de connexit\'{e} et vari\'{e}t\'{e}s ab\'{e}liennes},
   JOURNAL = {Amer. J. Math.},
    VOLUME = {117},
     date = {1995},
    NUMBER = {3},
     PAGES = {787--805},
       DOI = {10.2307/2375089},
       URL = {https://doi.org/10.2307/2375089}
}
\bib{De1995b}{article}{
    AUTHOR = {Debarre, Olivier},
     TITLE = {Fulton-{H}ansen and {B}arth-{L}efschetz theorems for
              subvarieties of abelian varieties},
   JOURNAL = {J. Reine Angew. Math.},
    VOLUME = {467},
      YEAR = {1995},
     PAGES = {187--197},
       DOI = {10.1515/crll.1995.467.187},
       URL = {https://doi.org/10.1515/crll.1995.467.187},
}

\bib{De2006}{article}{
author ={Debarre, Olivier},
title ={On covering of simple abelian varieties},
date ={2006},
journal ={Bull. Soc. math. France},
volume={134},
number={2},
pages={253-260}
}

\bib{EL19997}{article}{
      author={Ein, Lawrence},
      author={Lazarsfeld, Robert},
       title={Singularities of theta divisors and the birational geometry of
  irregular varieties},
        date={1997},
        ISSN={0894-0347},
     journal={J. Amer. Math. Soc.},
      volume={10},
      number={1},
       pages={243\ndash 258},
         url={https://doi.org/10.1090/S0894-0347-97-00223-3},
      review={\MR{1396893}},
}

\bib{Ho}{article}{
      author={H{\"o}ring, Andreas},
       title={M-regularity of the {F}ano surface},
        date={2007},
     journal={Comptes Rendus Mathematique},
      volume={344},
      number={11},
       pages={691\ndash 696},
}

\bib{KP}{article}{
      author={K{\"u}ronya, Alex},
      author={Pintye, Norbert},
       title={Castelnuovo--{M}umford regularity and log-canonical thresholds},
        date={2013},
     journal={arXiv preprint arXiv:1312.7778},
}

\bib{Laz}{book}{
      author={Lazarsfeld, Robert},
       title={Positivity in algebraic geometry. {I} \& {II}},
      series={Ergebnisse der Mathematik und ihrer Grenzgebiete. 3.},
   publisher={Springer-Verlag, Berlin},
        date={2004},
      volume={48-49},
}

\bib{LW}{article}{
      author={Lombardi, Luigi},
      author={Niu, Wenbo},
       title={Theta-regularity of curves and {B}rill-{N}oether loci},
        date={2016},
     journal={Mathematics Research Letters},
      volume={23},
      number={6},
}

\bib{Pa2012}{incollection}{
      author={Pareschi, Giuseppe},
       title={Basic results on irregular varieties via {F}ourier-{M}ukai
  methods},
        date={2012},
   booktitle={Current developments in algebraic geometry},
      series={Math. Sci. Res. Inst. Publ.},
      volume={59},
   publisher={Cambridge Univ. Press, Cambridge},
       pages={379\ndash 403},
      review={\MR{2931876}},
}

\bib{PP2003}{article}{
      author={Pareschi, Giuseppe},
      author={Popa, Mihnea},
       title={Regularity on abelian varieties. {I}},
        date={2003},
        ISSN={0894-0347},
     journal={J. Amer. Math. Soc.},
      volume={16},
      number={2},
       pages={285\ndash 302},
         url={https://doi.org/10.1090/S0894-0347-02-00414-9},
      review={\MR{1949161}},
}

\bib{PP2004}{article}{
      author={Pareschi, Giuseppe},
      author={Popa, Mihnea},
       title={Regularity on abelian varieties. {II}. {B}asic results on linear
  series and defining equations},
        date={2004},
        ISSN={1056-3911},
     journal={J. Algebraic Geom.},
      volume={13},
      number={1},
       pages={167\ndash 193},
         url={https://doi.org/10.1090/S1056-3911-03-00345-X},
      review={\MR{2008719}},
}

\bib{PP2008b}{article}{
      author={Pareschi, Giuseppe},
      author={Popa, Mihnea},
       title={Castelnuovo theory and the geometric {S}chottky problem},
        date={2008},
        ISSN={0075-4102},
     journal={J. Reine Angew. Math.},
      volume={615},
       pages={25\ndash 44},
         url={https://doi.org/10.1515/CRELLE.2008.008},
      review={\MR{2384330}},
}

\bib{PP2008c}{article}{
      author={Pareschi, Giuseppe},
      author={Popa, Mihnea},
       title={Generic vanishing and minimal cohomology classes on abelian
  varieties},
        date={2008},
        ISSN={0025-5831},
     journal={Math. Ann.},
      volume={340},
      number={1},
       pages={209\ndash 222},
         url={https://doi.org/10.1007/s00208-007-0146-7},
      review={\MR{2349774}},
}

\bib{PP2008}{article}{
      author={Pareschi, Giuseppe},
      author={Popa, Mihnea},
       title={{$M$}-regularity and the {F}ourier-{M}ukai transform},
        date={2008},
        ISSN={1558-8599},
     journal={Pure Appl. Math. Q.},
      volume={4},
      number={3, Special Issue: In honor of Fedor Bogomolov. Part 2},
       pages={587\ndash 611},
         url={https://doi.org/10.4310/PAMQ.2008.v4.n3.a1},
      review={\MR{2435838}},
}

\bib{PP2011}{article}{
      author={Pareschi, Giuseppe},
      author={Popa, Mihnea},
       title={G{V}-sheaves, {F}ourier-{M}ukai transform, and generic
  vanishing},
        date={2011},
        ISSN={0002-9327},
     journal={Amer. J. Math.},
      volume={133},
      number={1},
       pages={235\ndash 271},
         url={https://doi.org/10.1353/ajm.2011.0000},
      review={\MR{2752940}},
}

\bib{PP3}{incollection}{
      author={Pareschi, Giuseppe},
      author={Popa, Mihnea},
       title={Regularity on abelian varieties {III}: relationship with generic
  vanishing and applications},
        date={2011},
   booktitle={Grassmannians, moduli spaces and vector bundles},
      series={Clay Math. Proc.},
      volume={14},
   publisher={Amer. Math. Soc., Providence, RI},
       pages={141\ndash 167},
      review={\MR{2807853}},
}

\end{biblist}
\end{bibdiv}

\end{document}